\newtheorem{thm}{Theorem}[section]
\newtheorem{lem}[thm]{Lemma}
\newtheorem{prop}[thm]{Proposition}
\theoremstyle{definition}
\theoremstyle{remark}
\numberwithin{equation}{section}
\newtheorem{ass}{Assumption}
\begin{document}
\title[]
%{Convergence rate of numerical scheme finding extremal functions of Sobolev inequalities}
{On the convergence  of the distributed Proximal point algorithm}

\author{Woocheol Choi}
\address{}
\email{choiwc@skku.edu}

\subjclass[2010]{Primary 90C25, 68Q25 }

\keywords{Proximal point method, Distributed optimization, Distributed gradient algorithm}

\maketitle

\begin{abstract} 
In this work, we establish convergence results for the distributed proximal point algorithm (DPPA) for distributed optimization problems. We consider the problem on the whole domain $\mathbb{R}^d$ and find a general condition 
on the stepsize and cost functions such that the DPPA is stable. We prove that the DPPA with stepsize $\eta>0$ exponentially converges to an $O(\eta)$-neighborhood of the optimizer.
Our result clearly explains the advantage of the DPPA with respect to the convergence and stability in comparison with the distributed gradient descent algorithm. We also provide numerical tests supporting the theoretical results.
\end{abstract}
%\begin{keyword}
%mirror descent,...
%\end{keyword}

\section{Introduction}

This works consider the distributed optimization 
\begin{equation}\label{goal}
\min_{x \in \mathbb{R}^d} f(x) = \sum_{i=1}^n f_i (x),
\end{equation}
where $n$ denotes the number of the agents in the network, and $f_i: \mathbb{R}^d \rightarrow \mathbb{R}$ is a differentiable local cost only known agent $i$ for each $1 \leq i \leq n$. %We can reformulate this problem as
%\begin{equation*}
%\begin{split}
%\min_{x_i \in \mathbb{R}^d} &~\sum_{i=1}^n f_i (x_i)
%\\
%s.t.&~x_i =x_j,\quad \forall~j \in N_i,
%\end{split}
%\end{equation*}
%where $N_i$ is the set of in-neighbors of agent $i$ in the directed network.
In recent years, extensive research has been conducted on distributed optimization due to its relevance in various applications, including wireless sensor networks \cite{BG, SGRR}, multi-agent control \cite{MC1, MC2, MC3}, smart grids \cite{SG1, SG2}, and machine learning \cite{ML1, ML2, ML3, ML4, ML5}.

Numerous studies have focused on solving distributed optimization problems on networks. Relevant research works include \cite{NO1, NOS, SLWY, SLYWY}, and the references therein. A key algorithm in this field is the distributed gradient descent algorithm (DGD), introduced in \cite{NO1}. Additionally, there exist various versions of distributed optimization algorithms such as EXTRA \cite{SLWY} and decentralized gradient tracking \cite{NOS, QL, XK2}. Lately, there has been growing interest in designing communication-efficient algorithms for distributed optimization \cite{S, CO, JXM, BBKW}.

The distributed proximal point algorithm (DPPA) was proposed in \cite{MFGP2, MFGP} as a distributed counterpart of the proximal point method, analogous to the relation between the distributed gradient descent algorithm and the gradient descent method. The works  \cite{MFGP2, MFGP} established the asymptotic convergence of the DPPA under the assumptions of a compact domain for each local cost $f_i$ and a decreasing step size. The work \cite{LFX2} designed the DPPA on a directed graph and proved a convergence estimate with rate $O(1/\sqrt{t})$ when the step size is set to $1/\sqrt{t}$ and each cost function has a compact domain.

It is a well-known fact that the proximal point method is more stable compared to the gradient descent method for large choices of the stepsize. This fact suggests that the DPPA may also exhibit be more stable than the DGD, as mentioned in the previous works \cite{LFX, LFX2, MFGP}. In this work, we provide convergence results for the DPPA in the case of the entire domain and a constant step size. Comparing the results with the convergence result \cite{YLY} of the DGD, we will find that the DPPA is more stable than the DGD when the stepsize is large.

The DPPA for the problem \eqref{goal} is described as follows: 
\begin{equation}\label{eq-1-10}
\begin{split}
\hat{x}_{i}(t)& = \sum_{j=1}^n w_{ij} x_j (t)
\\
x_i (t+1)& = \textrm{argmin}_{x \in \mathbb{R}^n}\Big( f_i (x) + \frac{1}{2\eta} \|x - \hat{x}_i (t)\|^2 \Big),
\end{split}
\end{equation}
where $w_{ij}$ denotes the weight for the communication among agents in the network desribed by an undirected graph $\mathcal{G}=(\mathcal{V},\mathcal{E})$. Each node in $\mathcal{V}$ represents an agent, and each edge $\{i,j\} \in \mathcal{E}$ means that $i$ can send messages to $j$ and vice versa. We consider a graph $\mathcal{G}$ satisfying the following assumption.
\begin{ass}\label{graph}
The communication graph $\mathcal{G}$ is undirected and connected, i.e., there exists a path between any two agents.
\end{ass}
We define the mixing matrix $W = \{w_{ij}\}_{1 \leq i,j \leq n}$ as follows. The nonnegative weight $w_{ij}$ is given for each communication link $\{i,j\}\in \mathcal{E},$ where $w_{ij}\neq0$ if $\{i,j\}\in\mathcal{E}$ and $w_{ij} = 0$ if $\{i,j\}\notin\mathcal{E}$. In this paper, we make the following assumption on the mixing matrix $W$. 
\begin{ass}\label{ass-1-1}
The mixing matrix $W = \{w_{ij}\}_{1 \leq i,j \leq n}$ is doubly stochastic, i.e., $W\mathbf{1}=\mathbf{1}$ and $\mathbf{1}^T W = \mathbf{1}^T$. In addition, $w_{ii}>0$ for all $i \in \mathcal{V}$. 
\end{ass}
%We arrange the eigenvalues of $W$ to satisfy
%\begin{equation*}
%\nonumber1=|\lambda_1 (W)| \geq |\lambda_2 (W)| \geq \cdots \geq |\lambda_n (W)| \geq 0.
%\end{equation*} 
 
In the following result, we establish that the sequence $\{x_k(t)\}_{k=1}^{n}$ is stable, i.e., uniformly bounded for $t\geq 0$ under suitable conditions.
\begin{thm}\label{thm-1-0}
Suppose that the assumptions 1-2 hold, and assume that one of the following conditions holds:
\begin{enumerate}
\item The following function 
\begin{equation}\label{eq-1-2}
F_{\eta}(x) := \sum_{k=1}^n f_k (x_k) + \frac{1}{2\eta} x^T (I-W) x
\end{equation}
is bounded below and has an optimal solution $(x_1^*, \cdots, x_n^*)$. 
\item Each function $f_k$ is $L$-smooth and $f$ is $\alpha$-stronlgy convex. In addition, the stepsize $\eta>0$ satisfies the following inequality
\begin{equation}\label{eq-1-15}
\eta^2 (\alpha^2 +\alpha L) + \eta \Big( \alpha + L - \frac{(1-\rho_W) \alpha^2}{L}\Big) < \frac{(1-\rho_W) \alpha}{L},
\end{equation}
where $\rho_W$ is the spectral norm of the matrix $W- \frac{1}{n}\mathbf{1}\mathbf{1}^T$.
\end{enumerate}
Then the sequence $\{x_k (t)\}_{k=1}^n$ of the DPPA with stepsize $\eta>0$ is uniformly bounded for $t \geq 0$.
\end{thm}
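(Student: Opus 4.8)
The plan is to treat the DPPA as a fixed-point-type iteration and exploit the variational characterization of the argmin to control the stacked vector $\mathbf{x}(t) = (x_1(t),\dots,x_n(t))$.

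\textbf{Case (1): the energy argument.} The cleanest route is to show that $F_\eta$ acts as a Lyapunov function. Write $\mathbf{x}(t+1) = \mathrm{argmin}_{\mathbf{x}}\big(\sum_k f_k(x_k) + \frac{1}{2\eta}\|\mathbf{x} - W\mathbf{x}(t)\|^2\big)$, where here $W$ acts blockwise on $\mathbb{R}^{nd}$. The first-order optimality condition reads $\nabla f_k(x_k(t+1)) + \frac{1}{\eta}(x_k(t+1) - (W\mathbf{x}(t))_k) = 0$ for each $k$. I would compare $F_\eta(\mathbf{x}(t+1))$ with $F_\eta(\mathbf{x}(t))$: using the optimality of $\mathbf{x}(t+1)$ for the perturbed functional and the identity $\frac{1}{2\eta}\mathbf{x}^T(I-W)\mathbf{x} = \frac{1}{2\eta}\|\mathbf{x}\|^2 - \frac{1}{2\eta}\mathbf{x}^TW\mathbf{x}$ together with $\|\mathbf{x}-W\mathbf{x}(t)\|^2$, one obtains a telescoping inequality of the form $F_\eta(\mathbf{x}(t+1)) + c_\eta \|\mathbf{x}(t+1)-\mathbf{x}(t)\|^2 \le F_\eta(\mathbf{x}(t))$, with $c_\eta>0$ coming from the positive semidefiniteness of $I-W$ plus the $\|\mathbf{x}-W\mathbf{x}(t)\|^2$ term (the key algebraic fact is that $I - W \succeq 0$ because $W$ is doubly stochastic with positive diagonal, and more precisely $\frac{1}{\eta}\|\mathbf{x}-W\mathbf{x}(t)\|^2 - \frac{1}{\eta}(\mathbf{x}(t+1)-\mathbf{x}(t))^T\cdots$ rearranges favorably). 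Hence $F_\eta(\mathbf{x}(t))$ is nonincreasing, so $F_\eta(\mathbf{x}(t)) \le F_\eta(\mathbf{x}(0))$ for all $t$; since $F_\eta$ is bounded below and has a minimizer, I then need coercivity of $F_\eta$ on its sublevel sets to conclude boundedness. Coercivity on the \emph{consensus-orthogonal} directions is immediate from $\mathbf{x}^T(I-W)\mathbf{x} \ge (1-\rho_W)\|\mathbf{x}_\perp\|^2$; along the consensus direction one falls back on the fact that $F_\eta$ restricted to $x_1=\dots=x_n=x$ is $f(x) + 0$, whose having a minimizer plus sublevel-boundedness must be extracted from hypothesis (1) itself. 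This coercivity bookkeeping is the main obstacle in Case (1).

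\textbf{Case (2): the contraction argument.} Here I would not use an energy but instead show the map $\Phi$ defined by $\mathbf{x}(t+1) = \Phi(\mathbf{x}(t))$ (the composition of the mixing step and the blockwise proximal step) is a contraction, or at least that $\|\mathbf{x}(t)\|$ satisfies a stable linear recursion. Decompose $x_k(t) = \bar{x}(t) + e_k(t)$ where $\bar{x}(t) = \frac{1}{n}\sum_k x_k(t)$ is the average and $e_k$ is the disagreement. The proximal operator $\mathrm{prox}_{\eta f_k}$ is $\frac{1}{1+\eta\alpha_k}$-Lipschitz when $f_k$ is $\alpha_k$-strongly convex and is $1$-Lipschitz merely under convexity, and it is also nonexpansive-type under $L$-smoothness; combined with $\|W\mathbf{x} - \mathbf{1}\bar{x}\| \le \rho_W\|\mathbf{x}-\mathbf{1}\bar{x}\|$ and a similar averaged estimate (using $\alpha$-strong convexity of $f = \sum f_i$ to contract the average toward the optimizer), one gets a $2\times 2$ linear system governing $(\|\mathbf{x}(t)_\perp\|, \|\bar{x}(t) - x^*\|)$ whose iteration matrix has spectral radius $<1$ precisely when inequality \eqref{eq-1-15} holds. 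The delicate point is establishing the right Lipschitz/contraction constant for the proximal step of a single $f_k$ that is only $L$-smooth (not necessarily convex individually), and pushing the cross term $\alpha^2/L$ through — that is exactly where the quadratic-in-$\eta$ condition \eqref{eq-1-15} comes from, so reverse-engineering the $2\times 2$ matrix so its characteristic polynomial reproduces \eqref{eq-1-15} is the computational heart of Case (2). Once the iteration matrix is a stable nonnegative matrix, uniform boundedness of $\|\mathbf{x}(t)\|$ — indeed geometric convergence to an $O(\eta)$ ball — follows from iterating the linear bound.

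In both cases the conclusion "uniformly bounded for $t\ge 0$" then follows by taking the supremum: in Case (1) from the sublevel-set coercivity of $F_\eta$, in Case (2) from the stable linear recursion. I expect Case (2)'s matching of the contraction factors to condition \eqref{eq-1-15}, and in particular the treatment of individually-nonconvex $f_k$ inside the prox, to be the principal technical hurdle; Case (1) is conceptually simpler but requires care that the Lyapunov decrement constant $c_\eta$ is genuinely positive and that coercivity holds on the full space and not just transverse to consensus.
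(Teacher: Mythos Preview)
Your outline here is essentially the paper's proof: the paper tracks exactly the pair $A_t = \|\bar x(t)-x_*\|$ and $B_t = n^{-1/2}\|\mathbf{x}(t)-\bar{\mathbf{x}}(t)\|$, derives the coupled implicit inequalities $(1+\eta\alpha)A_{t+1}\le A_t+\eta L B_{t+1}$ and $B_{t+1}\le \rho_W B_t + \eta L B_{t+1} + \eta L A_{t+1} + \eta D$ (these are Propositions~2.1 and~2.2), and then closes an induction $A_t\le R$, $B_{t+1}\le (\alpha/L)R$ for an explicit $R$. Your ``spectral radius of the $2\times 2$ iteration matrix $<1$'' framing is the same mechanism; the quadratic condition \eqref{eq-1-15} drops out of exactly that induction step.

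\textbf{Case (1).} Here your approach diverges from the paper's, and the obstacle you flag is in fact fatal for the Lyapunov route. Hypothesis (1) says only that $F_\eta$ is bounded below and admits a minimizer; it does \emph{not} give coercivity along the consensus subspace (take $f_k$ bounded with a minimum but with $f_k(x)\to c>-\infty$ as $|x|\to\infty$). So even if you succeed in proving $F_\eta(\mathbf{x}(t+1))\le F_\eta(\mathbf{x}(t))$, the sublevel set $\{F_\eta\le F_\eta(\mathbf{x}(0))\}$ need not be bounded, and the argument stalls. (A second, smaller issue: the DPPA is proximal gradient on $F_\eta$ with stepsize $\eta$ applied to the smooth part $\tfrac{1}{2\eta}\mathbf{x}^T(I-W)\mathbf{x}$, whose Lipschitz constant is $\tfrac{1}{\eta}\lambda_{\max}(I-W)$; the standard descent lemma then needs $\lambda_{\min}(W)\ge 0$, which Assumptions~1--2 do not guarantee. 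So even the monotone decrease of $F_\eta$ is not free.)

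The paper bypasses both problems by not using $F_\eta$ as an energy at all. It uses the minimizer $(x_1^*,\dots,x_n^*)$ only as a \emph{reference point}: from the two first-order conditions one gets
\[
(x_k(t+1)-x_k^*) + \eta\big(\nabla f_k(x_k(t+1))-\nabla f_k(x_k^*)\big) \;=\; \sum_{j} w_{kj}\,(x_j(t)-x_j^*),
\]
and hence (using nonexpansiveness of the proximal map, equivalently monotonicity of $\nabla f_k$) $\|x_k(t+1)-x_k^*\|\le \sum_j w_{kj}\|x_j(t)-x_j^*\|$. Summing over $k$ and using column-stochasticity of $W$ gives $\sum_k\|x_k(t+1)-x_k^*\|\le \sum_k\|x_k(t)-x_k^*\|$, so this $\ell^1$-type distance is nonincreasing and boundedness follows immediately. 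No coercivity, no Lyapunov decrement, and no spectral condition on $W$ are needed.
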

 We now compare the stability result with that of the DGD described by 
\begin{equation*}
x_i (t+1) = \sum_{j=1}^N w_{ij} x_j (t) - \eta \nabla f_i (x_i (t)).
\end{equation*} 
 Let $\lambda_n (W) \in \mathbb{R}$ be the smallest eigenvalue of $W$. Then it is known from \cite{YLY} that the DGD is stable if the stepsize $\eta>0$ satisfies
\begin{equation*}
\eta \leq \frac{1+\lambda_n (W)}{L},
\end{equation*}
provided that  each cost function $f_j$ is convex and $L$-smooth.  
\begin{table}[ht]\label{tab-5}
\centering
\begin{tabular}{|c|c|c|c| }\cline{1-4}
& Condition on the costs& Stepsize &Paper
%\\
%\hline
%\makecell{Saeed Ghadimi \\ Guanghui Lan}&NC & No & &&  &
\\
\hline
&&&\\[-1.2em]
DGD & \makecell{Each $f_j$ is convex \\ and $L$-smooth} &$\eta \leq \frac{(1+\lambda_n (W))}{L}$   &\cite{YLY}
\\
&&&\\[-1.2em]
\hline
&&&\\[-0.9em]
DPPA &\makecell{$F_{\eta}$ is bounded below \\ and has an optimizer} & $\eta \in (0,\infty)$   & This work
\\[0.9em] 
\hline
\end{tabular}
\vspace{0.1cm}
\caption{This table compares the condition on the stepsize for the stability of  the DGD and  the DPPA.}\label{known results1}
\end{table} 
Table \ref{tab-5} summarizes the condition on the stepsize $\eta>0$ for the stability of the DGD and the DPPA. 
We observe that $F_{\eta}$ defined in \eqref{eq-1-2} is bounded below for any $\eta>0$ when each $f_j$ is convex. %In fact, when each $f_j$ is convex and quadratic form, then $F_{\eta}$
Therefore, the condition of (1) in Theorem \ref{thm-1-0} is not so restrictive than the condition that $f_j$ is convex which is required for the stability result \cite{YLY} of the DGD. Hence the result of Theorem \ref{thm-1-0} proves that the stepsize choice of $\eta>0$ is much wider for the DPPA than that for the DGD. 

For the convergence analysis of the DPPA, we assume the uniformly boundedness property.  
\begin{ass} The sequence $\{x_k (t)\}_{k=1}^n$ is uniformly bounded for $t \geq 0$, i.e., there is $R>0$ such that
 \begin{equation*}
 A_t \leq R \quad \textrm{and}\quad B_t \leq R\quad \forall ~t \geq 0.
 \end{equation*}
 \end{ass}
 Although the property is guaranteed for broad class of functions by the result of Theorem \ref{thm-1-0}, we formulate this assumption of the sake of simplicity in the statement of the convergence result.
We also consider the following assumption. 
\begin{ass} The aggregate cost function $f$ is $\alpha$-strongly convex for some $\alpha >0$ and each cost function $f_j$ is $L$-smooth for each $1 \leq j \leq n$, i.e., 
\begin{equation*}
\|\nabla f_j (x) - \nabla f_j (y) \| \leq L\|x-y\|
\end{equation*}
for all $x,y\in \mathbb{R}^d$.
\end{ass}
Under this assumption, there exists a 
 unique optimizer $x_* = \arg\min_{x\in\mathbb{R}^d} f(x)$. We let  $D = \max_{1 \leq i \leq n} \|\nabla f_i (x_*)\|$. Also we regard $x_i(t)$ as a row vector in $\mathbb{R}^{1\times d}$, and define the variable  $\mathbf{x}(t)\in \mathbb{R}^{n\times d}$ and $\bar{\mathbf{x}}(t) \in \mathbb{R}^{n \times d}$ by
\begin{equation}\label{eq-2-4}
\mathbf{x}(t)=\left(x_1\left(t\right)^T, \cdots, x_m\left(t\right)^T\right)^T \quad \textrm{and}\quad \bar{\mathbf{x}}(t) = \left(\bar{x}\left(t \right)^T, \cdots, \bar{x}\left( t\right)^T \right)^T,
\end{equation}
where $\bar{x}(t) = \frac{1}{n} \sum_{k=1}^n x_k(t)$.  We let
\begin{equation*} 
A_t = \|\bar{x}(t) -x_*\|  \quad \textrm{and}\quad  B_t = \frac{1}{\sqrt{n}} \|\mathbf{\bar{x}}(t) - \mathbf{x}(t)\|= \Big( \frac{1}{n}\sum_{k=1}^n \|\bar{x}(t) -x_k (t)\|^2 \Big)^{1/2}.
\end{equation*} 
We show that the DPPA converges exponentially to an $O(\eta)$-neighborhood of the optimizer in the following result.
 \begin{thm}\label{thm-1-1}Suppose that the assumptions 1-3 hold and  
Then we have
\begin{equation}\label{eq-1-1}
A_t \leq   \frac{A_0}{(1+\eta\alpha)^{t}} + \frac{t \eta L B_0 \rho_W  }{(1+\eta \alpha)} \max\Big\{\rho_W, \frac{1}{1+\eta \alpha}\Big\}^{t-1} + \frac{\eta L (2RL +D)}{\alpha (1-\rho_W)} 
\end{equation}
and
\begin{equation*}
B_t  \leq (\rho_W)^t B_0 + \frac{\eta}{(1-\rho_W)} (2RL +D).
\end{equation*}
\end{thm}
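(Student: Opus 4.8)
The plan is to recast the local minimization in \eqref{eq-1-10} as an \emph{implicit} linear-plus-gradient recursion and then run two coupled one-step estimates: one for the disagreement $B_t$ and one for the optimality gap $A_t$. The first-order optimality condition for \eqref{eq-1-10} reads $\nabla f_i(x_i(t+1)) + \tfrac1\eta\big(x_i(t+1) - \hat x_i(t)\big) = 0$, i.e. $x_i(t+1) = \sum_{j} w_{ij}x_j(t) - \eta\nabla f_i(x_i(t+1))$. Stacking rows, $\mathbf x(t+1) = W\mathbf x(t) - \eta\nabla F(\mathbf x(t+1))$, where $\nabla F(\mathbf x)$ has $i$-th row $\nabla f_i(x_i)$. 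Multiplying by $\tfrac1n\mathbf 1^T$ and using double stochasticity gives $\bar x(t+1) = \bar x(t) - \tfrac\eta n\sum_{i}\nabla f_i(x_i(t+1))$, while applying $I-\tfrac1n\mathbf 1\mathbf 1^T$ and using $(I-\tfrac1n\mathbf 1\mathbf 1^T)W = W-\tfrac1n\mathbf 1\mathbf 1^T$ gives
\begin{equation*}
\mathbf x(t+1) - \bar{\mathbf x}(t+1) = \Big(W - \tfrac1n\mathbf 1\mathbf 1^T\Big)\big(\mathbf x(t) - \bar{\mathbf x}(t)\big) - \eta\Big(I - \tfrac1n\mathbf 1\mathbf 1^T\Big)\nabla F(\mathbf x(t+1)).
\end{equation*}

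For the disagreement, I would take Frobenius norms in this identity: since $W-\tfrac1n\mathbf 1\mathbf 1^T$ has spectral norm $\rho_W$ and $I-\tfrac1n\mathbf 1\mathbf 1^T$ is a projection, $\sqrt n\,B_{t+1} \le \rho_W\sqrt n\,B_t + \eta\|\nabla F(\mathbf x(t+1))\|_F$. By $L$-smoothness and $\nabla f(x_*)=0$ one has $\|\nabla f_i(y)\| \le L\|y-x_*\| + D$, and since $\tfrac1n\sum_i\|x_i(t+1)-x_*\|^2 \le 2A_{t+1}^2 + 2B_{t+1}^2 \le 4R^2$ by the uniform bound $A_t,B_t\le R$, this gives $\tfrac1{\sqrt n}\|\nabla F(\mathbf x(t+1))\|_F \le 2RL+D$. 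Hence $B_{t+1} \le \rho_W B_t + \eta(2RL+D)$, and summing the geometric series produces the stated bound $B_t \le \rho_W^t B_0 + \tfrac{\eta(2RL+D)}{1-\rho_W}$.

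For the optimality gap, I would split the averaged gradient as $\tfrac1n\sum_i\nabla f_i(x_i(t+1)) = \tfrac1n\sum_i\nabla f_i(\bar x(t+1)) + E_{t+1}$, where by Lipschitzness and Cauchy--Schwarz $\|E_{t+1}\| \le \tfrac Ln\sum_i\|x_i(t+1)-\bar x(t+1)\| \le L\,B_{t+1}$. The main part is the gradient of the aggregate objective at $\bar x(t+1)$, so after rearranging, $\bar x(t+1)$ is — up to the additive error $\eta E_{t+1}$ — the proximal image of $\bar x(t)$ for the aggregate objective, and $x_*$ is its fixed point. Invoking the standard contraction estimate for the resolvent/proximal operator of an $\alpha$-strongly convex function then yields $A_{t+1} \le \tfrac1{1+\eta\alpha}\big(A_t + \eta L B_{t+1}\big)$.

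Finally I would combine the two recursions. With $\gamma := (1+\eta\alpha)^{-1}$, unrolling gives $A_t \le \gamma^t A_0 + \eta L\sum_{s=1}^{t}\gamma^{t-s+1}B_s$; substituting $B_s \le \rho_W^s B_0 + \tfrac{\eta(2RL+D)}{1-\rho_W}$ splits the sum into a $B_0$-part equal to $\eta L\gamma\rho_W B_0\sum_{s=1}^{t}\gamma^{t-s}\rho_W^{s-1} \le t\,\eta L\gamma\rho_W B_0\max\{\rho_W,\gamma\}^{t-1}$ (each summand has total exponent $t-1$) and a remaining part $\tfrac{\eta^2L(2RL+D)}{1-\rho_W}\sum_{s=1}^{t}\gamma^s \le \tfrac{\eta^2L(2RL+D)}{1-\rho_W}\cdot\tfrac{\gamma}{1-\gamma} = \tfrac{\eta L(2RL+D)}{\alpha(1-\rho_W)}$; collecting the three pieces gives \eqref{eq-1-1}. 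The main obstacle is the implicitness of the scheme: in $\mathbf x(t+1) = W\mathbf x(t) - \eta\nabla F(\mathbf x(t+1))$ the gradient is evaluated at the \emph{updated} iterate, so one cannot simply iterate an a priori gradient bound — this is exactly where the uniform boundedness assumption (guaranteed in wide generality by Theorem \ref{thm-1-0}) is needed to control $\|\nabla F(\mathbf x(t+1))\|_F$. The remaining delicate point is the bookkeeping of the two coupled geometric recursions, where the convolution of the rates $\rho_W$ and $(1+\eta\alpha)^{-1}$ produces the factor $t\max\{\rho_W,(1+\eta\alpha)^{-1}\}^{t-1}$.
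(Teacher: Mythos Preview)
Your proposal is correct and follows essentially the same route as the paper. Both arguments reduce to the two one-step inequalities $(1+\eta\alpha)A_{t+1}\le A_t+\eta L B_{t+1}$ and $B_{t+1}\le\rho_W B_t+\eta(2RL+D)$ and then unroll the resulting convolution of geometric rates; the paper reaches the second inequality via the intermediate Proposition~\ref{lem-2-2} and then substitutes $A_{t+1},B_{t+1}\le R$, whereas you bound $\tfrac{1}{\sqrt n}\|\nabla F(\mathbf x(t+1))\|_F\le 2RL+D$ directly, and the paper carries out the unrolling through the auxiliary sequences $z_t,y_t$ while you do the same computation inline.
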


Upon knowing that  the DGD is stable,  the linear convergence result of the DGD is proved when the stepsize $\eta$ satisfies $\eta \leq \frac{2}{\alpha +L}$. We refer to \cite{YLY, CK} for the detail. As for the DPPA, we note that there is no restriction on the stepsize $\eta>0$ for the linear convergence as obtained in \mbox{Theorem \ref{thm-1-1}.} Table \ref{tab-2} compares the condition on the stepsize of the DGD and the DPPA for the convergence of the algorithms.
\begin{table}[ht]
\centering
\begin{tabular}{|c|c|c|c| c|}\cline{1-5}
& Condition on the costs& DGD & Estimate &Paper
%\\
%\hline
%\makecell{Saeed Ghadimi \\ Guanghui Lan}&NC & No & &&  &
\\
\hline
&&&&\\[-1.2em]
DGD & \makecell{Each $f_j$ is   $L$-smooth \\ $f$ is $\alpha$-strongly convex} &$\eta \leq\frac{2}{\alpha+L}$  & $O(e^{-ct}) + O\Big(\frac{\eta}{1-\rho_W}\Big) $ &\cite{YLY, CK}
\\
&&&&\\[-1.2em]
\hline
&&&&\\[-0.9em]
DPPA &\makecell{Each $f_j$ is  $L$-smooth \\ $f$ is $\alpha$-strongly convex} & $\eta \in (0,\infty)$ &$O(e^{-ct}) +  O\Big(\frac{\eta}{1-\rho_W}\Big)$ & This work
\\[0.9em] 
\hline
\end{tabular}
\vspace{0.1cm}
\caption{This table compares the condition on the stepsize for the stability of the  DGD and the DPPA.}\label{tab-2}
\end{table}

The rest of this paper is organized as follows. In Section 2, we establish two sequential inequalities of $A_t$ and $B_t$. Section 3 is devoted to prove the uniform boundedness result of Theorem \ref{thm-1-0}. In Section 4, we prove the  convergence result of Theorem \ref{thm-1-1}. Numerical results are presented in Section \ref{sec-5}.

\section{Sequential estimates}
 In this section, we dervie two sequential inequalities of $A_t$ and $B_t$, which are main ingredients for the stability and the convergence analysis of the DPPA.

\begin{prop}\label{lem-2-1}Assume that $f$ is $\alpha$-strongly convex. Then, for any stepsize $\eta >0$, the sequence $\{(A_t, B_t)\}_{t \geq 0}$ satisfies the follwoing inequality
\begin{equation}\label{eq-2-12}
(1+\eta \alpha) A_{t+1} \leq A_t + \eta L B_{t+1}
\end{equation} 
for all $t\geq 0$.
\end{prop}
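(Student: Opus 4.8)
The plan is to read \eqref{eq-2-12} directly off the first-order optimality condition of the proximal subproblem. Writing that condition for the minimization defining $x_i(t+1)$ in \eqref{eq-1-10} gives, wherever the subproblem has a solution, the implicit gradient form $x_i(t+1) = \hat{x}_i(t) - \eta\nabla f_i(x_i(t+1))$ with $\hat{x}_i(t)=\sum_{j}w_{ij}x_j(t)$. Averaging over $i$ and using that $W$ is doubly stochastic — so that $\frac1n\sum_{i=1}^n\hat{x}_i(t)=\bar{x}(t)$ — produces the exact recursion for the mean,
\begin{equation*}
\bar{x}(t+1)-x_* \;=\; \big(\bar{x}(t)-x_*\big)\;-\;\eta\,\bar{g}(t+1),\qquad \bar{g}(t+1):=\frac1n\sum_{i=1}^n\nabla f_i\big(x_i(t+1)\big).
\end{equation*}

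I would then pair this identity with $\bar{x}(t+1)-x_*$ and split $\bar{g}(t+1)$ into its value at the consensus point plus a disagreement remainder, $\bar{g}(t+1) = \frac1n\sum_i\nabla f_i(\bar{x}(t+1)) + \frac1n\sum_i\big(\nabla f_i(x_i(t+1))-\nabla f_i(\bar{x}(t+1))\big)$. The first piece is the aggregate gradient $\nabla f(\bar{x}(t+1))$ up to the normalization in \eqref{goal}; since $x_*$ minimizes the $\alpha$-strongly convex $f$ we have $\nabla f(x_*)=0$, and $\inp{\nabla f(\bar{x}(t+1))-\nabla f(x_*)}{\bar{x}(t+1)-x_*}\ge\alpha A_{t+1}^2$, which after the $\eta$ in front is exactly the term $\eta\alpha A_{t+1}^2$ that passes to the left-hand side to create the prefactor $1+\eta\alpha$. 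For the remainder, $L$-smoothness of each $f_i$ together with the Cauchy--Schwarz inequality over the $n$ agents give
\begin{equation*}
\Big\|\frac1n\sum_{i=1}^n\big(\nabla f_i(x_i(t+1))-\nabla f_i(\bar{x}(t+1))\big)\Big\| \;\le\; \frac{L}{n}\sum_{i=1}^n\|x_i(t+1)-\bar{x}(t+1)\| \;\le\; \frac{L}{\sqrt n}\Big(\sum_{i=1}^n\|x_i(t+1)-\bar{x}(t+1)\|^2\Big)^{1/2} \;=\; L\,B_{t+1}.
\end{equation*}

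Collecting the three contributions — $\inp{\bar{x}(t)-x_*}{\bar{x}(t+1)-x_*}\le A_tA_{t+1}$ for the cross term, $-\eta\alpha A_{t+1}^2$ from strong convexity, and $\eta L B_{t+1}A_{t+1}$ from the disagreement remainder — yields $A_{t+1}^2\le A_tA_{t+1}-\eta\alpha A_{t+1}^2+\eta L B_{t+1}A_{t+1}$, and dividing through by $A_{t+1}$ (the case $A_{t+1}=0$ being trivial) gives \eqref{eq-2-12}.

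I do not anticipate a conceptual obstacle; the delicate points are all bookkeeping. Because the proximal step is implicit, the gradients $\nabla f_i$ are evaluated at the updated iterates $x_i(t+1)$, so the quantity that appears on the right is genuinely $B_{t+1}$ — the consensus error at the new time — which is precisely what makes \eqref{eq-2-12} a recursion coupling $A_t$ and $B_t$ rather than one closing on $A_t$ alone. One must also use the aggregate structure, so that $\sum_i\nabla f_i(\bar{x}(t+1))$ is proportional to $\nabla f(\bar{x}(t+1))$ and strong convexity of $f$ can be applied, even though no convexity is assumed for the individual $f_i$. Finally, the factors $1/n$ (from the average) and $\sqrt n$ (from Cauchy--Schwarz over $n$ summands) have to be tracked so that the disagreement term lands precisely as $\eta L B_{t+1}$; this is why the normalization $B_t=\big(\frac1n\sum_k\|\bar{x}(t)-x_k(t)\|^2\big)^{1/2}$ is the natural one for this estimate.
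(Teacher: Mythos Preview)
Your proposal is correct and follows essentially the same route as the paper: derive the averaged implicit recursion from the proximal optimality condition, split the averaged gradient into the consensus part $\nabla f(\bar{x}(t+1))$ and the disagreement remainder, use $\alpha$-strong convexity on the former and $L$-smoothness plus Cauchy--Schwarz on the latter. The only cosmetic difference is that the paper rearranges to $\bar{x}(t+1)-x_*+\eta\big(\nabla f(\bar{x}(t+1))-\nabla f(x_*)\big)=\bar{x}(t)-x_*-\eta(\text{remainder})$, takes norms, and lower-bounds the left side via $\|u+\eta(\nabla f(u+x_*)-\nabla f(x_*))\|^2\ge(1+\eta\alpha)^2\|u\|^2$, whereas you pair with $\bar{x}(t+1)-x_*$ and divide out $A_{t+1}$; both extract the same factor $(1+\eta\alpha)$.
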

\begin{proof}
From the minimality of \eqref{eq-1-10}, it follows that
\begin{equation}\label{eq-2-3}
\nabla f_k (x_k (t+1)) + \frac{1}{\eta} \Big(x_k (t+1) - \sum_{j=1}^n w_{kj} x_j (t)\Big) =0.
\end{equation} 
We reformulate this as
\begin{equation}\label{eq-2-10}
x_k  (t+1) + \eta \nabla f_k (x_k (t+1))  = \sum_{j=1}^n w_{kj} x_j (t).
\end{equation}
By averaging this for $1 \leq k \leq n$, we get
\begin{equation}\label{eq-2-11}
\bar{x}(t+1) + \frac{\eta}{n} \sum_{k=1}^n \nabla f_k (x_k (t+1)) = \bar{x}(t).
\end{equation}
Using this and the fact that $\nabla f(x_*)=0$, we find
\begin{equation}\label{eq-2-1}
\begin{split}
&\bar{x}(t+1) -x_* + \eta (\nabla f(\bar{x}(t+1)) - \nabla f(x_*))
\\
& = \bar{x}(t) -x_* + \eta \Big( \sum_{k=1}^n \nabla f_k (\bar{x}(t+1)) - \nabla f_k (x_k (t+1))\Big).
\end{split}
\end{equation}
Since $f$ is assumed to be $\alpha$-strongly convex, we have
\begin{equation*}
\|\nabla f(x) - \nabla f(y) \|^2 \geq \alpha^2 \|x-y\|^2
\end{equation*}
and
\begin{equation*}
\langle x -y, ~\nabla f(x) - \nabla f(y) \rangle \geq \alpha \|x-y\|^2.
\end{equation*}
Combining these estimates, we get
\begin{equation*}
\begin{split}
&\Big\|\bar{x}(t+1) -x_* + \eta \Big( \nabla f( \bar{x}(t+1)) - \nabla f(x_*)\Big) \Big\|^2
\\
& = \|\bar{x}(t+1) -x_*\|^2 + 2 \eta \langle \bar{x}(t+1)-x_*,~ \nabla f(\bar{x}(t+1)) - \nabla f(x_*)\rangle 
\\
&\qquad + \eta^2 \|\nabla f(\bar{x}(t+1)) - \nabla f(x_*)\|^2
\\
& \geq (1+2\eta \alpha + \alpha^2) \|\bar{x}(t+1) -x_*\|^2.
\end{split}
\end{equation*}
Using this estimate in \eqref{eq-2-1} and applying the triangle inequality,  we get
\begin{equation*}
\begin{split}
&(1+\eta  \alpha ) \|\bar{x}(t+1) -x_*\|
\\
&\leq   \|\bar{x}(t) -x_*\|  + \frac{\eta}{n} \Big\| \sum_{k=1}^n \nabla f_k (\bar{x}(t+1)) - \nabla f_k (x_k (t+1)) \Big\|
\\
&\leq   \|\bar{x}(t) -x_*\|  +  \frac{\eta L}{n} \sum_{k=1}^n \|\bar{x}(t+1) - x_k (t+1)\| 
\\
& \leq  \|\bar{x}(t) -x_*\|  +  \frac{\eta L}{\sqrt{n}} \|\bar{\mathbf{x}}(t+1) - \mathbf{x}(t+1)\|,
\end{split}
\end{equation*}
where we used the Cauchy-Schwartz inequality in the last inequality. This gives the desired inequality. 
\end{proof}

Next we will derive a bound of $B_{t+1}$ in terms of $A_t$ and $B_t$. For this we will use the following result (see \cite[Lemma 1]{PN}).
\begin{lem}\label{lem-1-1}
Suppose Assumptions \ref{graph} and \ref{ass-1-1} hold, and let $\rho_W$ be the spectral norm of the matrix $W - \frac{1}{n} \mathbf{1} \mathbf{1}^T.$  Then we have $\rho_W<1$ and
\begin{equation*}
\sum_{i=1}^n \Big\| \sum_{j=1}^n w_{ij} (x_j - \bar{x}) \Big\|^2 \leq (\rho_W)^2 \sum_{i=1}^n \|x_i - \bar{x}\|^2,
\end{equation*}
 where $\bar{x} = \frac{1}{n} \sum_{k=1}^n x_k$ 
 for any $x_i\in \mathbb{R}^{d\times1}$ and $1\leq i \leq n$.
\end{lem}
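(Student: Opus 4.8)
The plan is to reduce the whole statement to properties of the single matrix $W-J$, where $J:=\tfrac1n\mathbf 1\mathbf 1^T$ is the orthogonal projection onto $\mathrm{span}(\mathbf 1)$, and then to treat the displayed inequality (routine linear algebra) separately from the bound $\rho_W<1$ (the structural part). It suffices to handle $d=1$, since both sides of the inequality split as sums over the $d$ coordinates.

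\textbf{The inequality.} By Assumption \ref{ass-1-1}, $W\mathbf 1=\mathbf 1$ gives $WJ=\tfrac1n(W\mathbf 1)\mathbf 1^T=J$, and with $J^2=J$ this yields $W(I-J)=W-WJ=W-J$ together with $(W-J)J=0$, so that $W-J=(W-J)(I-J)$. For $x=(x_1,\dots,x_n)^T\in\mathbb R^n$ every entry of $Jx$ equals $\bar x$, the $i$-th entry of $(I-J)x$ is $x_i-\bar x$, and the $i$-th entry of $W(I-J)x$ is precisely $\sum_{j=1}^n w_{ij}(x_j-\bar x)$. Hence
\begin{equation*}
\sum_{i=1}^n\Big(\sum_{j=1}^n w_{ij}(x_j-\bar x)\Big)^2=\|(W-J)(I-J)x\|^2\le\|W-J\|_2^2\,\|(I-J)x\|^2=\rho_W^2\sum_{i=1}^n(x_i-\bar x)^2,
\end{equation*}
where we used $\|Az\|\le\|A\|_2\|z\|$ with $A=W-J$, whose spectral norm is $\rho_W$ by definition.

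\textbf{The spectral gap.} Only $\rho_W<1$ carries real content. If $W$ is symmetric this is immediate: $W-J$ is symmetric and its eigenvalues are exactly the eigenvalues of $W$ other than the eigenvalue $1$ with eigenvector $\mathbf 1$; Assumption \ref{graph} makes $W$ irreducible and $w_{ii}>0$ makes it aperiodic, hence primitive, so by Perron--Frobenius all remaining eigenvalues have modulus strictly below $1$, i.e. $\rho_W=\|W-J\|_2<1$. For a general doubly stochastic $W$ one runs the same argument with $W^TW$ in place of $W$: it is symmetric, positive semidefinite, doubly stochastic, has strictly positive diagonal, and --- because the edge set is undirected (so the support of $W$ is symmetric) and $w_{ii}>0$ --- its off-diagonal support contains that of $W$, making $W^TW$ irreducible; thus $W^TW$ is primitive and $\lambda_2(W^TW)<1$. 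Since $\|(W-J)y\|^2=y^TW^TWy\le\lambda_2(W^TW)\|y\|^2$ for all $y\perp\mathbf 1$ and $W-J$ kills $\mathrm{span}(\mathbf 1)$, we conclude $\rho_W^2\le\lambda_2(W^TW)<1$.

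I expect the strict bound $\rho_W<1$ to be the only real obstacle, and it is where both hypotheses genuinely enter: connectivity of $\mathcal G$ rules out a reducible $W$ that would retain the eigenvalue $1$ on $\mathbf 1^\perp$, and $w_{ii}>0$ rules out periodicity, which could otherwise place an eigenvalue at $-1$ (as for a bipartite-type $W$). Everything else holds for an arbitrary doubly stochastic $W$ and is bookkeeping.
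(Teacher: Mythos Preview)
The paper does not prove this lemma at all; it simply cites \cite[Lemma~1]{PN} and moves on. Your argument is correct and self-contained: the displayed inequality is indeed just the operator-norm bound applied to $W-J$ acting on $(I-J)x$, and your derivation of $\rho_W<1$ via Perron--Frobenius (first for symmetric $W$, then for general doubly stochastic $W$ through $W^TW$) is sound. In the general case you implicitly use $(W-J)y=Wy$ for $y\perp\mathbf 1$ when writing $\|(W-J)y\|^2=y^TW^TWy$; this is immediate from $Jy=0$, but it might be worth a half-line. Your observation that the undirected edge structure forces the support of $W$ to be symmetric, hence the off-diagonal support of $W^TW$ to contain that of $W$, is exactly what makes the irreducibility of $W^TW$ go through and is the one place where both assumptions are genuinely used.
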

 In the following result, we find an estimate of $B_{t+1}$ in terms of $B_t$ and $A_{t+1}$.
 \begin{prop}\label{lem-2-2}Suppose that each $f_j$ is $L$-smooth. Then the sequence $\{(A_t, B_t)\}_{t \geq 0}$ satisfies the following inequality
 \begin{equation}\label{eq-2-13}
B_{t+1} \leq \rho_W B_t + \eta L B_{t+1} + \eta L A_{t+1} + \eta D
\end{equation}
for all $t \geq 0$.
\end{prop}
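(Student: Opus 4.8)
\emph{Proof proposal.} The plan is to mimic the structure of the proof of Proposition \ref{lem-2-1}, starting from the reformulated optimality condition \eqref{eq-2-10} and its average \eqref{eq-2-11}. Subtracting \eqref{eq-2-10} from \eqref{eq-2-11} and using that $W$ is row-stochastic, so that $\sum_{j=1}^n w_{kj}\bar{x}(t) = \bar{x}(t)$, gives for each $1 \le k \le n$
$$\bar{x}(t+1) - x_k(t+1) = \sum_{j=1}^n w_{kj}\big(\bar{x}(t) - x_j(t)\big) + \eta\Big(\nabla f_k(x_k(t+1)) - \tfrac{1}{n}\sum_{l=1}^n \nabla f_l(x_l(t+1))\Big).$$
I would then take Euclidean norms, sum the squares over $k$, take square roots, and apply the triangle inequality in the $\ell^2$ sense (first pointwise, then Minkowski in $\mathbb{R}^n$) to split the right-hand side into a ``consensus'' contribution and a ``gradient'' contribution.

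For the consensus contribution, Lemma \ref{lem-1-1} applies verbatim to the vectors $x_j(t) - \bar{x}(t)$ and yields $\big(\sum_k \|\sum_j w_{kj}(\bar{x}(t) - x_j(t))\|^2\big)^{1/2} \le \rho_W\big(\sum_k \|\bar{x}(t) - x_k(t)\|^2\big)^{1/2} = \rho_W \sqrt{n}\, B_t$. For the gradient contribution, set $g_k := \nabla f_k(x_k(t+1))$ and $\bar g := \frac1n \sum_l g_l$. Since the arithmetic mean minimizes the sum of squared deviations, $\sum_k \|g_k - \bar g\|^2 \le \sum_k \|g_k\|^2$. Then $L$-smoothness of $f_k$ together with the triangle inequality gives $\|g_k\| \le \|\nabla f_k(x_*)\| + L\|x_k(t+1) - x_*\| \le D + L A_{t+1} + L\|x_k(t+1) - \bar{x}(t+1)\|$, and one more application of Minkowski in $\mathbb{R}^n$ bounds $\big(\sum_k \|g_k\|^2\big)^{1/2}$ by $\sqrt{n}(D + L A_{t+1}) + L\sqrt{n}\, B_{t+1}$.

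Combining the two estimates gives $\sqrt{n}\, B_{t+1} \le \rho_W \sqrt{n}\, B_t + \eta\big(L\sqrt{n}\, B_{t+1} + L\sqrt{n}\, A_{t+1} + \sqrt{n}\, D\big)$, and dividing through by $\sqrt{n}$ produces \eqref{eq-2-13}. I do not anticipate a genuine obstacle here; the only step needing a little attention is the gradient term — specifically, realizing that the centered gradients $g_k - \bar g$ can be dominated by the uncentered ones via $\sum_k\|g_k-\bar g\|^2 \le \sum_k\|g_k\|^2$, and then decomposing $\nabla f_k(x_k(t+1))$ around $x_*$ (to bring in $D$) and around $\bar{x}(t+1)$ (to bring in $A_{t+1}$ and $B_{t+1}$) before re-applying the $\ell^2$ triangle inequality.
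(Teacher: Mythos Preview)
Your proposal is correct and follows essentially the same route as the paper: subtract the averaged optimality condition from the per-agent one, control the consensus term by Lemma~\ref{lem-1-1}, bound the centered gradients by the uncentered ones, and then split $\nabla f_k(x_k(t+1))$ through $x_*$ and $\bar{x}(t+1)$ using $L$-smoothness. The only differences are presentational---the paper writes everything in stacked matrix form and phrases the step $\sum_k\|g_k-\bar g\|^2\le\sum_k\|g_k\|^2$ as ``the spectral radius of $I_n-\tfrac{1}{n}\mathbf{1}\mathbf{1}^T$ is one,'' which is the same fact you invoke via the variance identity.
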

 
\begin{proof}
We may write \eqref{eq-2-10} and \eqref{eq-2-11} in the following way
\begin{equation*}
\mathbf{x}(t+1) +\eta \nabla F (\mathbf{x}(t+1)) = W \mathbf{x}(t)
\end{equation*}
and
\begin{equation*}
\bar{\mathbf{x}}(t+1) +\eta \overline{\nabla F }(\mathbf{x}(t+1)) = \bar{\mathbf{x}}(t),
\end{equation*}
where we have let
\begin{equation*}
\overline{\nabla F}(\mathbf{x}(t+1)) = \Big( \nabla f_1 (x_1 (t+1)), \cdots, \nabla f_n (x_n (t+1))\Big)^T.
\end{equation*}
Combining the above equalities, we find 
\begin{equation*} 
\mathbf{x}(t+1) - \bar{\mathbf{x}}(t+1) = W (\mathbf{x}(t)-\bar{\mathbf{x}}(t)) - \eta \Big( \nabla F(\mathbf{x}(t+1)) - \overline{\nabla F}(\mathbf{x}(t+1))\Big).
\end{equation*}
By applying the triangle inequality and Lemma \ref{lem-1-1}, we deduce
\begin{equation}\label{eq-2-2}
\begin{split}
&\|\mathbf{x}(t+1) - \bar{\mathbf{x}}(t+1)\|
\\
&\leq \rho_W \|\mathbf{x}(t) - \bar{\mathbf{x}}(t)\| + \eta \|\nabla F(\mathbf{x}(t+1)) - \overline{\nabla F}(\mathbf{x}(t+1)) \|.
\end{split}
\end{equation}
Using the fact that the spectral radius of the matrix $\Big(I_{n} -\frac{1}{n}1_n^T 1_n\Big)$ is one, we obtain
\begin{equation*}
\begin{split}
&\|\nabla F(\mathbf{x}(t+1)) - \overline{\nabla F}(\mathbf{x}(t+1)) \|
\\
& \leq \| \nabla F(\mathbf{x}(t+1))\|
\\
& \leq  \|\nabla F(\mathbf{x}(t+1)) - \nabla F(\bar{\mathbf{x}}(t+1))\| + \|\nabla F(\bar{\mathbf{x}}(t+1)) -\nabla F(x_*)\| + \|\nabla F(x_*)\|
\\
& \leq L \|\mathbf{x}(t+1) - \bar{\mathbf{x}}(t+1)\| + \sqrt{n} L \|\bar{{x}}(t+1) - {x}_*\| + \sqrt{n} D.
\end{split}
\end{equation*}
Inserting this into \eqref{eq-2-2} we obtain
\begin{equation*}
\begin{split}
&\|\mathbf{x}(t+1) - \bar{\mathbf{x}}(t+1)\|
\\
&\leq \rho_W \|\mathbf{x}(t) - \bar{\mathbf{x}}(t)\| + \eta L \|\mathbf{x}(t+1) - \bar{\mathbf{x}}(t+1)\|
\\
&\quad  + \sqrt{n} L \eta \|\bar{x}(t+1) - x_*\| + \sqrt{n} D\eta,
\end{split}
\end{equation*}
which is the desired inequality. 
\end{proof}
 
 \section{Boundedness of the sequence}
 We prove the uniform boundedness result of Theorem \ref{thm-1-0} under the two conditions separately in the below.
\begin{proof}[Proof of Theorem \ref{thm-1-0}]

 Assume the first condition of Theorem \ref{thm-1-0}. We claim the following inequality 
 \begin{equation}\label{eq-3-1}
\sum_{k=1}^{n} \|x_k (t+1) -x_k^*\| \leq  \sum_{k=1}^n \|x_k (t) -x_k^*\|\quad \forall~ t \geq 0.
\end{equation} 
The optimizer $(x_1^*, \cdots, x_n^*)$ of \eqref{eq-2-2} satisfies
\begin{equation*}
\nabla f_k (x_k^*) + \frac{1}{\eta} \Big( x_k^* -\sum_{j=1}^n w_{kj} x_j^* \Big) = 0.
\end{equation*}
Combining this with \eqref{eq-2-3} gives
\begin{equation*}
\eta( \nabla f_k (x_k (t+1))- \nabla f_k (x_k^*)) +   (x_k (t+1) -x_k^*) = \sum_{j=1}^n w_{kj}( x_j (t)-x_j^*).
\end{equation*}
From this we have
\begin{equation*}
\|x_k (t+1) -x_k^*\| \leq \Big\|\sum_{j=1}^n w_{kj} (x_j (t) -x_j^*)\Big\| \leq \sum_{j=1}^n w_{kj} \|x_j (t) -x_j^*\|.
\end{equation*}
Summing up this for $1\leq  k \leq n$, we get
\begin{equation*}
\sum_{k=1}^{n} \|x_k (t+1) -x_k^*\| \leq \sum_{k=1}^n  \sum_{j=1}^n w_{kj} \|x_j (t) -x_j^*\| = \sum_{j=1}^n \|x_j (t) -x_j^*\|,
\end{equation*}
which proves the inequality \eqref{eq-3-1}. It gives the following bound
\begin{equation*}
\sum_{k=1}^{n} \|x_k (t) -x_k^*\| \leq \sum_{k=1}^n  \sum_{j=1}^n w_{kj} \|x_j (t) -x_j^*\| = \sum_{j=1}^n \|x_j (0) -x_j^*\|\quad \forall~ t\geq 0.
\end{equation*}
Hence $A_t$ and $B_t$ are uniformly bounded.

Next we assume the second condition of Theorem \ref{thm-1-0}.  
Then we claim that the sequence $A_t$ and $B_t$ satisfies
\begin{equation*}
A_t \leq R \quad \textrm{and}\quad B_t \leq \frac{\alpha}{L} R,
\end{equation*}
where $R>0$ is defined by 
\begin{equation}\label{eq-3-3}
R = \max \Bigg\{ A_0, ~\frac{L}{\alpha}B_0, ~\frac{L}{\alpha}B_1,~ \frac{\eta D}{\frac{\alpha}{L} \Big( 1- \eta L - \frac{\eta^2 L^2}{(1+\eta \alpha)} \Big) - \frac{\alpha}{L} \rho_W - \frac{\eta L}{(1+\eta \alpha)}}\Bigg\}.
\end{equation}
We argue by induction to prove this claim. First, we note that $A_0 \leq R$, $B_0 \leq R$ and $B_1 \leq R$ by the definition of $R$. Next we assume that
\begin{equation}\label{eq-3-10}
A_t \leq R \quad \textrm{and}\quad B_{t+1} \leq cR 
\end{equation}
for some $t \geq 0$ and $c = \frac{\alpha}{L}$. Then, we use these bounds in  \eqref{eq-2-12}  to find
\begin{equation*}
(1+\eta \alpha)A_{t+1} \leq (R+\eta Lc R) = (1+\eta \alpha)R,
\end{equation*}
which gives $A_{t+1} \leq R$. Next we recall the estimates \eqref{eq-2-12} and \eqref{eq-2-13} with $t$ replaced by $(t+1)$ as
\begin{equation*}
B_{t+2} \leq \rho_W B_{t+1} + \eta L B_{t+2} + \eta L A_{t+2} + \eta D
\end{equation*}
and
\begin{equation*}
\begin{split}
(1+\eta \alpha) A_{t+2}& \leq A_{t+1} + \eta L B_{t+2}
\\
&\leq R + \eta L B_{t+1},
\end{split}
\end{equation*}
where we used $A_{t+1} \leq R$ in the last inequality.
Combining these estimates with \eqref{eq-3-10} gives
\begin{equation*}
\Big( 1- \eta L -  \frac{\eta^2 L^2}{(1+\eta \alpha)} \Big) B_{t+2} \leq (\rho_W ) (cR) + \frac{\eta L}{1+\eta \alpha} R + \eta D.
\end{equation*}
This gives $B_{t+2} \leq cR$ provided that 
\begin{equation*}
 \rho_W (cR) + \frac{\eta L}{(1+\eta \alpha)} R + \eta D \leq Rc \Big( 1- \eta L -  \frac{\eta^2 L^2}{(1+\eta \alpha)} \Big).
\end{equation*}
This holds true for $R>0$ defined in \eqref{eq-3-3} and $\eta>0$ satisfying
\begin{equation*}
\frac{\alpha}{L} \Big((1-\rho_W) - \eta L - \frac{\eta^2 L^2}{(1+\eta \alpha)}\Big) > \frac{\eta L}{(1+\eta \alpha)},
\end{equation*}
which is same with 
\begin{equation*}
\eta^2 (\alpha^2 +\alpha L) + \eta \Big( \alpha + L - \frac{(1-\rho_W) \alpha^2}{L}\Big) < \frac{(1-\rho_W) \alpha}{L}.
\end{equation*}
The proof is done.

\end{proof}

\section{Convergence result}

In this section, we prove the main convergence result of the decentralized proximal point method. 
 
\begin{proof}[Proof of Theorem \ref{thm-1-1}]
By Proposition \ref{lem-2-1} and Proposition \ref{lem-2-2}  we have the following inequalities
\begin{equation}\label{eq-4-1}
(1+\eta \alpha) A_{t+1} \leq A_t + \eta L B_{t+1}
\end{equation}
and
\begin{equation*}
B_{t+1} \leq \rho_W B_t + \eta L B_{t+1} + \eta L A_{t+1} + \eta D.
\end{equation*}
Using the assumption that $A_{t+1} \leq R$ and $B_{t+1} \leq R$, we have
\begin{equation*}
B_{t+1} \leq (\rho_W ) B_t + \eta (2RL +D)
\end{equation*}
for all $t \geq 0$. Using this iteratively, we get
\begin{equation*}
\begin{split}
B_t & \leq (\rho_W)^t B_0 + \eta (2RL +D) \Big[ 1 + \rho_W + \cdots + (\rho_W)^{t-1}\Big]
\\
& \leq (\rho_W)^t B_0 + \frac{\eta}{(1-\rho_W)} (2RL +D).
\end{split}
\end{equation*}
Putting this inequality into \eqref{eq-4-1} leads to
\begin{equation}\label{eq-4-2}
\begin{split}
&(1+\eta \alpha)A_{t+1}
\\
& \leq A_t + \eta L \Big[ (\rho_W)^{t+1} B_0 + \frac{\eta}{(1-\rho_W)} (2RL +D)\Big] 
\\
& = A_t + \eta L B_0 (\rho_W)^{t+1} + \frac{\eta^2 L}{(1-\rho_W)} (2RL +D).
\end{split}
\end{equation}
To analyize this sequential estimate, we consider two positive sequences $\{z_t\}_{t \geq 0}$ and $\{y_t\}_{t \geq 0}$ satisfying
\begin{equation*}
(1+\eta\alpha) z_{t+1} = z_t + \eta L B_0 (\rho_W)^{t+1} 
\end{equation*}
and 
\begin{equation*}
(1+\eta \alpha) y_{t+1}= y_t + \frac{\eta^2 L}{(1-\rho_W)} (2RL +D),
\end{equation*}
where $z_0 = A_0$ and $y_0 = 0$. It then follows from \eqref{eq-4-2} that $A_t \leq z_t +y_t$ for all $t \geq 0$.

We estimate $z_{t }$ as follows:
\begin{equation*}
\begin{split}
z_t & = \frac{z_0}{(1-\eta\alpha)^{t}} + \frac{\eta L B_0 \rho_W}{(1+\eta \alpha)} \Big[ \sum_{j=0}^{t-1} \frac{(\rho_W)^j}{(1+\eta \alpha)^{t-1-j}}\Big]
\\
& \leq \frac{z_0}{(1-\eta\alpha)^{t}} + \frac{t \eta L B_0\rho_W   }{(1+\eta \alpha)} \max\Big\{\rho_W, \frac{1}{1+\eta \alpha}\Big\}^{t-1}.
\end{split}
\end{equation*}
Next we estimate $y_t$ as
\begin{equation*}
\begin{split}
y_t & = \frac{1}{(1+\eta\alpha)^t} y_0 + \frac{\eta^2 L}{(1+\eta \alpha) (1-\rho_W)} (2RL +D) \sum_{j=0}^{t-1}  \frac{1}{(1+\eta \alpha)^{j-1}}
\\
& \leq \frac{1}{(1+\eta\alpha)^t} y_0 + \frac{\eta^2 L}{(1+\eta \alpha) (1-\rho_W)} (2RL +D) \sum_{j=0}^{\infty} \frac{1}{(1+\eta \alpha)^{j}}
\\
&=\frac{y_0}{(1+\eta \alpha)^t} + \frac{\eta L (2RL +D)}{\alpha (1-\rho_W)}.
\end{split}
\end{equation*}
Combining the above estimates gives
\begin{equation*}
\begin{split}
A_t & \leq z_t + y_t
\\
 &\leq   \frac{A_0}{(1-\eta\alpha)^{t}} + \frac{t \eta L B_0 \rho_W }{(1+\eta \alpha)} \max\Big\{\rho_W, \frac{1}{1+\eta \alpha}\Big\}^{t-1} + \frac{\eta L (2RL +D)}{\alpha (1-\rho_W)}.
 \end{split}
\end{equation*}
This corresponds to the inequality \eqref{eq-1-15} in the condition (2).  Thus, it holds that $A_{t+1} \leq R$ and $B_{t+2} \leq cR$. This completes the induction, and so the claim is proved. Hence the uniform boundedness is proved. 
\end{proof}

\section{Numerical tests}\label{sec-5}

This section gives numerical experimental results of the DPPA. We consider the cost function
\begin{equation*}
f(x)  = \frac{1}{n} \sum_{k=1}^n \|A_k x- y_k\|^2,
\end{equation*}
where $n$ is the number of agents and for each $1 \leq i \leq n$ and $A_i$ be an $m\times d$ matrix whose element is chosen randomly following the normal distribution $N(0,1)$. Also we set $y_i \in \mathbb{R}^m$ whose each element  is generated from the normal distribution $N(0,1)$. We choose $n=20$, $m=5$ and $d=10$.

For the communication matrix $W$, we link each two agents with probability 0.4, and define the weights $w_{ij}$ by 
\begin{equation*}
w_{ij}= \left\{ \begin{array}{ll} 1/ \max\{\textrm{deg}(i), \textrm{deg}(j)\}&~\textrm{if}~ i \in N_i,
\\
1- \sum_{j \in N_i} w_{ij}&~\textrm{if}~ i=j,
\\
0& ~otherwise.
\end{array}
\right.
\end{equation*} 
We define the following value
\begin{equation*}
\eta_c = \frac{1+\lambda_n (W)}{L},
\end{equation*}
where $\lambda_n (W)$ is the smallest eigenvalue of $W$ and $L$ is the constant for the smoothness propery of $f_j$ for $1 \leq j \leq n$. In  our experiment, the constants are computed as $L\simeq 29.7312$ and $\lambda_n (W) \simeq -0.4009$. Therefore we find $\eta_c \simeq 0.0202$.

We perform the numerical simulation for the DPPA and the DGD with stepsize chosen as $\eta = \eta_c  +0.005$ and $\eta = \eta_c$. Figure \ref{fig1} indicates the graphs of 
 the error $\log(\sum_{k=1}^{n}\|x_k (t) -x_*\|/n)$ with respect to $t \geq 0$. The result shows that both the DPPA and the DGD are stable for $\eta =\eta_c$ as the theoretical results of Table \ref{known results1} guarantee. On the other hand, if we choose $\eta = \eta_c + 0.005$, then the DGD becomes unstable while the DPPA is still stable. This supports  the result of Theorem \ref{thm-1-0}.  
\begin{figure}[htbp]
\includegraphics[height=5.5cm, width=7.3cm]{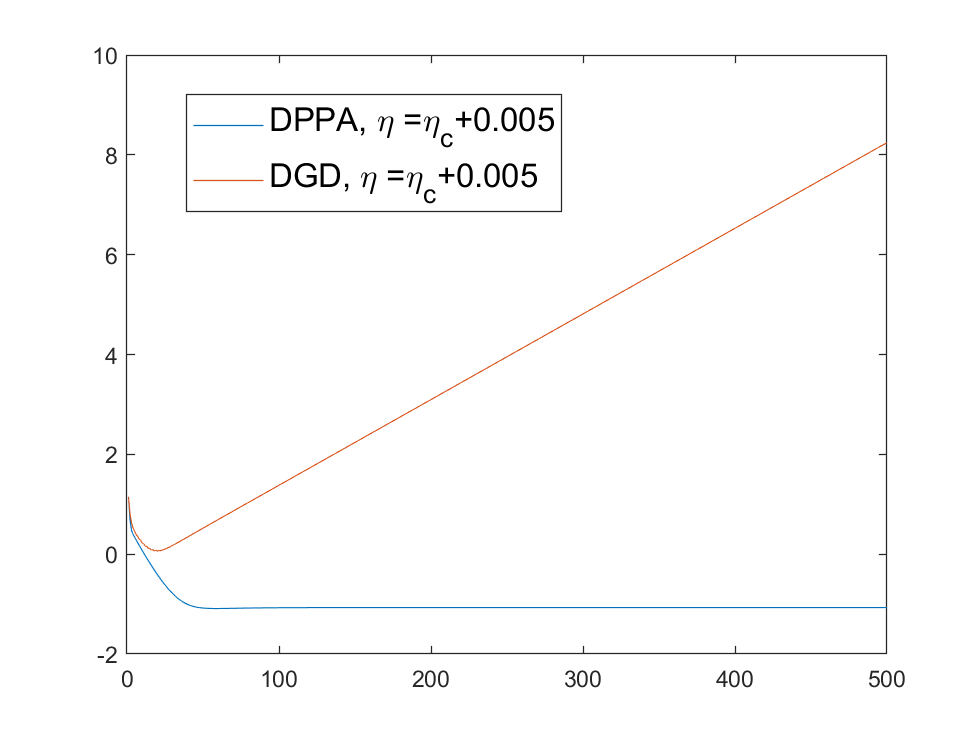} \includegraphics[height=5.5cm, width=7.3cm]{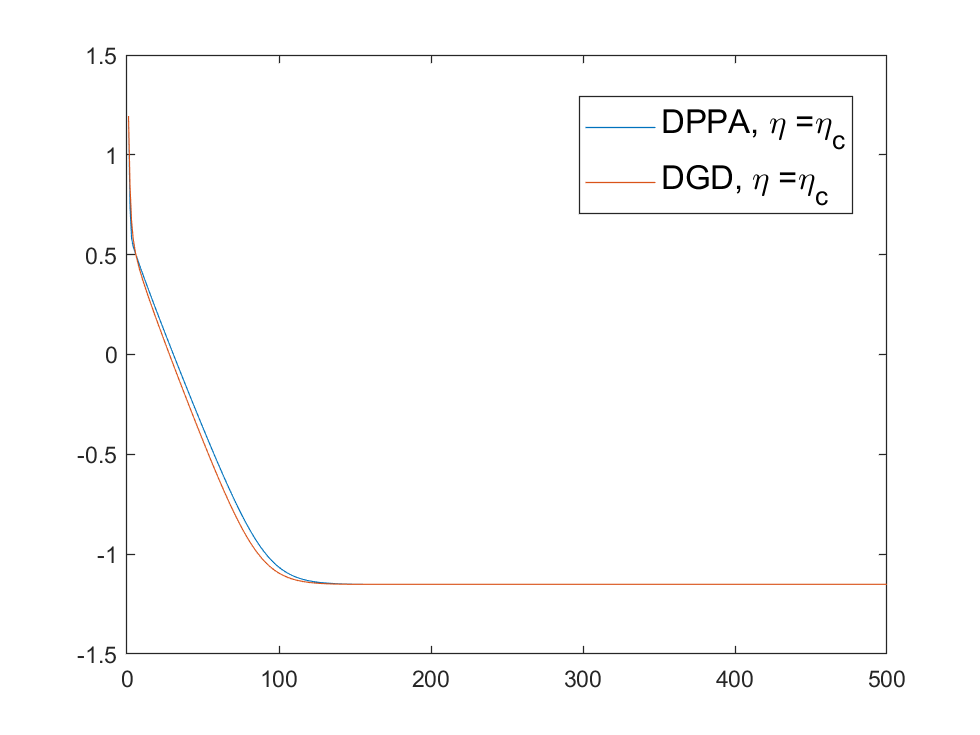}
\vspace{-0.3cm}\caption{The graphs of the error $\log(\sum_{k=1}^{n}\|x_k (t) -x_*\|/n)$ for the DPPA and the DGD for $t \geq 0$ with stepsize $\eta =\eta_c + 0.005$ and $\eta =\eta_c$.}\label{fig1}
\end{figure}
Next we perform the numerical test for the DPPA with various stepsize $\eta \in \{0.001, 0.01, 0.1, 1,2\}$. We measure the error $\log(\sum_{k=1}^{n}\|x_k (t) -x_*\|/n)$  and the graphs are given in Figure \ref{fig2}. The result shows that the error decreases exponentially up to an $O(\eta)$ value, which supports the convergence result of Theorem \ref{thm-1-1}.
\begin{figure}[htbp]
\includegraphics[height=5.5cm, width=7.3cm]{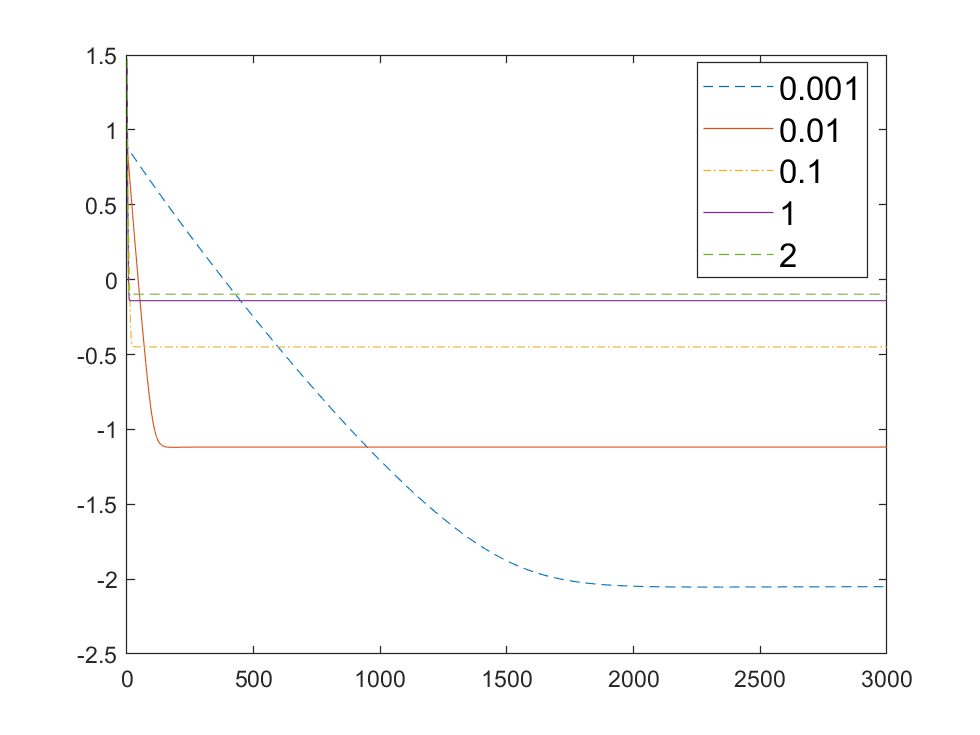} 
\vspace{-0.3cm}\caption{The graphs of the error $\log(\sum_{k=1}^{n}\|x_k (t) -x_*\|/n)$ for the DPPA  with respect to  $t \geq 0$ and stepsize $\eta\in \{0.001, 0.01, 0.1, 1, 2\}$.}\label{fig2}
\end{figure}

\section*{Acknowledgments}
The author was supported by the National Research Foundation of Korea (NRF) grants funded by the Korea government No. NRF-2016R1A5A1008055 and No. NRF-2021R1F1A1059671.

\end{document}